\documentclass[11pt]{article}

\usepackage[margin=1.05in]{geometry}
\usepackage{amsmath,amssymb,amsthm,mathtools}
\usepackage{microtype}
\usepackage[hidelinks]{hyperref}
\hypersetup{
  pdftitle={Discrete Concavity of Token-Graph Spectral Radii via Lorentzian Semigroups},
  pdfauthor={Weiqi Jiang},
  pdfsubject={Spectral concavity and Lorentzian heat flow for weighted token graphs},
  pdfkeywords={token graph, spectral radius, discrete concavity, Lorentzian polynomial, ultra log-concavity, Lie--Trotter product formula}
}

\pdftrailerid{%
  \pdfmdfivesum file {main.tex}%
  \pdfmdfivesum file {references.bib}%
}

\newtheorem{theorem}{Theorem}[section]
\newtheorem{lemma}[theorem]{Lemma}
\newtheorem{corollary}[theorem]{Corollary}
\newtheorem{remark}[theorem]{Remark}

\newcommand{\R}{\mathbb{R}}
\newcommand{\one}{\mathbf{1}}
\newcommand{\lmax}{\lambda_{\max}}
\newcommand{\Sym}{\operatorname{Sym}}

\title{Discrete Concavity of Token-Graph Spectral Radii\\
via Lorentzian Semigroups}
\author{Weiqi Jiang\thanks{Institute of Theoretical Physics, Chinese Academy of Sciences;
\href{mailto:jiangweiqi@itp.ac.cn}{\texttt{jiangweiqi@itp.ac.cn}}}}
\date{August 23, 2026}

\begin{document}
\maketitle

\begin{abstract}
Let $F_k(G)$ be the $k$-token graph of a finite graph with nonnegative edge
weights, and let $A_k$ and $D_k$ be its weighted adjacency and degree
matrices.  For every $-1\leq\vartheta\leq1$, we prove that
$k\mapsto\lambda_{\max}(A_k+\vartheta D_k)$ is discretely concave.
Complement symmetry then makes this sequence nondecreasing up to the middle
level.  At $\vartheta=1$ and $\vartheta=0$, this gives the signless-Laplacian
and adjacency spectral-radius monotonicity conjectures of Apte, Parekh, and
Sud.  The spectral result follows from a finite-time theorem: for every
$t\geq0$, the heat contents
$\binom nk^{-1}\one^{\!*}e^{t(A_k+\vartheta D_k)}\one$ are log-concave in
$k$.  We encode all token levels in one Lorentzian polynomial.  A
four-variable operator symbol proves preservation by each edge heat gate,
and the Lie--Trotter formula passes this preservation to the full semigroup.
Large-time growth rates recover the top eigenvalues.  The same construction
also yields Lorentzian polynomials from top spectral projections.  Finally,
we show that the local symbol certifies exactly the parameter range $[-1,1]$.
\end{abstract}

\medskip
\noindent\textbf{2020 Mathematics Subject Classification.}
Primary 05C50; Secondary 05A20, 15A18, 15B48, 81Q10.

\smallskip
\noindent\textbf{Keywords.}
Token graph, spectral radius, discrete concavity, Lorentzian polynomial,
ultra log-concavity, Lie--Trotter product formula.

\section{Introduction}

Let $G$ be a finite graph.  A configuration of $k$ indistinguishable tokens
with hard-core exclusion is a $k$-subset of $V(G)$.  One legal move replaces
an occupied vertex by an unoccupied neighbor; the graph of these
configurations and moves is the $k$-token graph $F_k(G)$.  Token graphs were
introduced systematically by Fabila-Monroy et
al.~\cite{fabila-monroy-et-al-2012}.  They interpolate between the original
graph, since $F_1(G)\cong G$, and its higher-particle exclusion dynamics.
They also carry an involution $S\mapsto V(G)\setminus S$, which identifies
levels $k$ and $n-k$.  These elementary features make token graphs a natural
setting for comparing spectra across particle number.

The comparison studied here arose explicitly in work of Apte, Parekh, and
Sud (APS) on block decompositions of $2$-local Hamiltonians
\cite{apte-parekh-sud-2026}.  Their Conjecture~5 asks whether the largest
signless-Laplacian eigenvalue is nondecreasing from $F_k(G)$ to
$F_{k+1}(G)$ before the middle level; Conjecture~6 asks the same for the
adjacency spectral radius.  The conjectures allow nonnegative edge weights.
APS also formulated four fixed-level eigenvalue bounds, their Conjectures
1--4.  Bakshi, Basu, Kothari, and Li (BBKL) proved those four bounds using
Kikuchi-graph methods \cite{bakshi-basu-kothari-li-2026}.  The BBKL result
does not state or prove the successive-level comparisons in APS
Conjectures~5 and~6.  Thus the latter problems call for a mechanism that
couples all particle-number blocks, rather than an estimate within one
block.

We obtain that mechanism for an entire interval of matrices.  Let
$G=(V,E,w)$ have $n$ vertices and nonnegative edge weights $w_e$.  Write
$A_k$ and $D_k$ for the weighted adjacency and degree matrices of
$F_k(G)$, and put
\begin{equation}\label{eq:H-definition}
 H_k^{(\vartheta)}=A_k+\vartheta D_k,
 \qquad
 h_k^{(\vartheta)}=\lmax\bigl(H_k^{(\vartheta)}\bigr).
\end{equation}
We include the one-vertex levels $k=0,n$, for which
$H_0^{(\vartheta)}=H_n^{(\vartheta)}=[0]$.

\begin{theorem}[Spectral concavity]\label{thm:spectral}
For every nonnegatively weighted finite graph and every
$-1\leq\vartheta\leq1$,
\begin{equation}\label{eq:spectral-concavity}
 2h_k^{(\vartheta)}
 \geq h_{k-1}^{(\vartheta)}+h_{k+1}^{(\vartheta)}
 \qquad (1\leq k\leq n-1).
\end{equation}
Furthermore,
$h_k^{(\vartheta)}=h_{n-k}^{(\vartheta)}$, and hence
\begin{equation}\label{eq:monotonicity}
 h_k^{(\vartheta)}\leq h_{k+1}^{(\vartheta)}
 \qquad (0\leq k<n/2).
\end{equation}
\end{theorem}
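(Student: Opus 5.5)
We follow the route sketched in the abstract: first prove a finite-time log-concavity statement, then pass to large $t$. For $t\ge0$ set
\[
 Z_k(t)=\binom nk^{-1}\one^{*}e^{tH_k^{(\vartheta)}}\one ,
 \qquad 0\le k\le n .
\]
We aim to show $Z_k(t)^2\ge Z_{k-1}(t)Z_{k+1}(t)$ for every $t\ge0$.

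To encode all levels at once, identify $\R^{\binom{V}{k}}$ with multiaffine forms of degree $k$ in variables $x_v$, $v\in V$, and homogenize with a second variable $y$. A vector $f=(f_k)_k$ then corresponds to
\[
 P_f(x,y)=\sum_k\sum_{|S|=k} f_k(S)\,x^S y^{n-k}.
\]
The all-ones initial data gives $\prod_v(x_v+y)$, which is Lorentzian.

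The total operator $\bigoplus_k H_k^{(\vartheta)}$ splits as a sum over edges $e=uv$ of $w_e$ times a local operator $L_e$. In the two variables $x_u,x_v$ (with the rest frozen), $L_e$ swaps the $x_u$ and $x_v$ monomials and adds $\vartheta$ times the indicator that exactly one of $u,v$ is occupied. So $e^{tw_eL_e}$ acts only on the four-dimensional span of $1,x_u,x_v,x_ux_v$. On that span it fixes $1$ and $x_ux_v$, and acts on $\{x_u,x_v\}$ by the $2\times2$ matrix
\[
 e^{s\vartheta}\begin{pmatrix}\cosh s&\sinh s\\ \sinh s&\cosh s\end{pmatrix},
 \qquad s=tw_e .
\]
The key lemma is that this edge heat gate preserves Lorentzian polynomials, i.e.\ the multiaffine, $M$-convex-support Lorentzian cone in the Brändén--Huh sense.

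To prove it, we use the Brändén--Huh criterion: a linear operator $T$ on multiaffine polynomials preserves Lorentzian polynomials if its symbol is Lorentzian. The symbol is $T$ applied to $\prod(x_i+z_i)$ after the usual reversal. Here it reduces to a four-variable polynomial in $x_u,x_v,z_u,z_v$:
\[
 z_uz_v+x_ux_v+e^{s\vartheta}\bigl(\cosh s\,(x_uz_v+x_vz_u)+\sinh s\,(x_uz_u+x_vz_v)\bigr).
\]
This is a quadratic form, so being Lorentzian means nonnegative coefficients together with at most one positive eigenvalue of its Hessian. Computing the eigenvalues, the condition reduces to $e^{s\vartheta}(\cosh s\pm\sinh s)\ge1$ in the right combination, i.e.\ $e^{s(\vartheta\pm1)}$ compared against $1$. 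This is where exactly $-1\le\vartheta\le1$ is needed. We expect this Hessian verification to be the main obstacle: the careful bookkeeping of the symbol and the sign condition. If the clean symbol criterion fails for the nonhomogeneous degree shifts, the fallback is to verify the Lorentzian property directly on quadratic derivatives, using closedness of the cone under multiplication by products of linear forms.

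With the gate lemma in hand, apply the Lie--Trotter formula,
\[
 e^{tH}=\lim_{N\to\infty}\Bigl(\prod_e e^{(t/N)w_eL_e}\Bigr)^N .
\]
Each finite product preserves the Lorentzian cone, and the cone is closed, so $P(t)=e^{tH}\prod_v(x_v+y)$ is Lorentzian. Now specialize $x_v=x$ for all $v$. Specialization is a nonnegative linear substitution, which preserves the Lorentzian property, and it yields
\[
 \sum_k \one^{*}e^{tH_k}\one\; x^ky^{n-k}.
\]
A bivariate Lorentzian polynomial has ultra log-concave coefficients, so $Z_k(t)^2\ge Z_{k-1}Z_{k+1}$, with no internal zeros since every $Z_k(t)>0$.

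To pass to eigenvalues, note that by Perron--Frobenius on each block, after adding a multiple of the identity to make it nonnegative,
\[
 \lim_{t\to\infty}\tfrac1t\log Z_k(t)=h_k .
\]
Here $\one$ has positive overlap with a nonnegative top eigenvector; on disconnected blocks we take the maximum over components, each of which has positive overlap. Dividing $2\log Z_k\ge\log Z_{k-1}+\log Z_{k+1}$ by $t$ and letting $t\to\infty$ gives \eqref{eq:spectral-concavity}.

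The symmetry $h_k=h_{n-k}$ follows from complementation $S\mapsto V\setminus S$, which is a graph isomorphism $F_k\cong F_{n-k}$ preserving edge weights and degrees. Finally, a concave sequence that is symmetric about $n/2$ is nondecreasing on $[0,n/2]$, which gives \eqref{eq:monotonicity}.
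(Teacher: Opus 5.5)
Your proposal is correct and follows essentially the same route as the paper. The shared steps are the homogenized all-level polynomial $\prod_v(x_v+y)$ and the four-variable symbol of each edge heat gate, whose Hessian has eigenvalues $1\pm e^{(\vartheta+1)s}$, $-1-e^{(\vartheta-1)s}$, $e^{(\vartheta-1)s}-1$. Then come Lie--Trotter with closedness of the cone, the bivariate specialization, the shifted Perron growth-rate argument, and complement symmetry.

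Two points to tighten when you write it out. First, $y$ has degree up to $n$, so the operator acts on a bounded-degree space, not a multiaffine one; the full symbol is $(y+\eta)^n\prod_{r\notin e}(x_r+z_r)$ times your quadratic, which is Lorentzian by closure under products. Second, the sign conditions are $e^{(\vartheta+1)s}\ge 1$ and $e^{(\vartheta-1)s}\le 1$, not both ``$\ge 1$''.
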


Taking $\vartheta=1$ in \eqref{eq:monotonicity} proves Conjecture 5 of
\cite{apte-parekh-sud-2026}, because $H_k^{(1)}=D_k+A_k$ is the signless
Laplacian.  Taking $\vartheta=0$ proves their Conjecture 6 for adjacency
spectral radii.

Theorem~\ref{thm:spectral} follows from the following finite-time ultra
log-concavity statement.

\begin{theorem}[Finite-time ultra log-concavity]\label{thm:heat}
For $t\geq0$ and $-1\leq\vartheta\leq1$, set
\begin{equation}\label{eq:heat-content}
 a_k^{(\vartheta)}(t)
 =\one_k^{\!*}e^{tH_k^{(\vartheta)}}\one_k.
\end{equation}
Then, for $1\leq k\leq n-1$,
\begin{equation}\label{eq:ulc}
 \left(\frac{a_k^{(\vartheta)}(t)}{\binom nk}\right)^2
 \geq
 \frac{a_{k-1}^{(\vartheta)}(t)}{\binom n{k-1}}
 \frac{a_{k+1}^{(\vartheta)}(t)}{\binom n{k+1}}.
\end{equation}
\end{theorem}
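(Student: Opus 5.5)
We plan to encode all token levels in a single homogeneous polynomial and show it is Lorentzian. Work on the exterior/fermionic Fock space: for each vertex $v$ introduce variables, and for a vector $\psi=(\psi_S)_{S\subseteq V}$ in $\bigoplus_k \R^{\binom Vk}$ consider the generating polynomial
\[
 P_\psi(x,y)=\sum_{S\subseteq V}\psi_S\, x^{|S|}y^{\,n-|S|}
 \quad\text{refined to}\quad
 \Phi_\psi(z,\bar z)=\sum_{S}\psi_S\prod_{v\in S}z_v\prod_{v\notin S}\bar z_v ,
\]
a multiaffine homogeneous polynomial of degree $n$ in $2n$ variables. The vector $\one=\bigoplus_k\one_k$ gives $\Phi_\one=\prod_v(z_v+\bar z_v)$, which is Lorentzian (a product of linear forms with nonnegative coefficients). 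The total operator $H=\bigoplus_k H_k^{(\vartheta)}$ is a sum over edges $e=uv$ of gates $w_e h_e$, where $h_e$ acts only on the coordinates of $u,v$: it swaps the configurations $\{u\}$ and $\{v\}$ (adjacency part) and multiplies them by $\vartheta$ (degree part), and kills the doubly occupied/empty configurations. Since $\one^{*}e^{tH}\one$ is obtained by pairing $e^{tH}\one$ against $\one$, and the pairing $\sum_S\psi_S$ is the specialization $z_v=x,\bar z_v=y$ of $\Phi_\psi$, we get $\sum_k a_k(t)x^ky^{n-k}$ as a specialization of $\Phi_{e^{tH}\one}$. Lorentzian polynomials are closed under such linear diagonal substitutions, and a bivariate Lorentzian form has ultra log-concave coefficients, which is exactly \eqref{eq:ulc}.

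The key steps, in order: (i) reduce to showing that $\psi\mapsto e^{sw_eh_e}\psi$ preserves Lorentzianity of $\Phi_\psi$ for each single edge and each $s\ge0$; (ii) since $e^{sw_eh_e}$ acts only on the four monomials $z_uz_v,\ z_u\bar z_v,\ \bar z_uz_v,\ \bar z_u\bar z_v$, it is a linear operator on multiaffine polynomials in the pair of vertices with the others as coefficients. Explicitly it fixes $z_uz_v$ and $\bar z_u\bar z_v$ and acts on the span of $z_u\bar z_v,\bar z_uz_v$ by $\exp\!\bigl(sw_e\bigl(\begin{smallmatrix}\vartheta&1\\1&\vartheta\end{smallmatrix}\bigr)\bigr)$, i.e. by a matrix $\begin{smallmatrix}\alpha&\beta\\\beta&\alpha\end{smallmatrix}$ with $\alpha=e^{s w_e\vartheta}\cosh(sw_e)$ and $\beta=e^{sw_e\vartheta}\sinh(sw_e)$. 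By the Brändén--Huh symbol theorem, a linear operator on multiaffine polynomials in these variables preserves Lorentzian polynomials if its symbol, a polynomial in the four variables $z_u,\bar z_u,z_v,\bar z_v$ together with dual variables, is Lorentzian. We would therefore compute this four-variable symbol and check the Lorentzian conditions (nonnegative coefficients, M-convex support, and at most one positive eigenvalue of the quadratic derivatives). (iii) Apply the Lie--Trotter formula $e^{tH}=\lim_m\bigl(\prod_e e^{tw_eh_e/m}\bigr)^m$; each finite product preserves Lorentzianity, and the Lorentzian cone is closed, so the limit $\Phi_{e^{tH}\one}$ is Lorentzian.

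The main obstacle is step (ii). The symbol has to be Lorentzian exactly when the gate matrix satisfies an inequality like $\alpha\cdot 1\ge \beta^2$ or $\alpha^2-\beta^2\le$ (product of fixed diagonal entries $=1$). Here $\alpha^2-\beta^2=e^{2sw_e\vartheta}$, and $\alpha\le1$ requires roughly $e^{sw_e\vartheta}\cosh(sw_e)$ compared against $1$. The precise two-sided condition should reduce to $|\vartheta|\le1$, which matches the abstract's remark that the symbol certifies exactly $[-1,1]$. To work around the fixed normalization, we would first rescale the gate by a positive scalar $e^{-c s}$ (harmless, since scalar factors preserve Lorentzianity) and possibly apply a diagonal rescaling of $z,\bar z$. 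We would then verify the Hessian-signature condition by hand on the $4\times4$ quadratic form. In outline: the symbol is a sum of a product form $(z_u+\bar z_u)(z_v+\bar z_v)$-type term and a correction supported on the mixed monomials. Its Hessian has one positive eigenvalue precisely when $\beta\le\alpha$ together with a discriminant inequality that is equivalent to $-1\le\vartheta\le1$ for all $s\ge0$.
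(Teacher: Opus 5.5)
Your outer scaffolding matches the paper: an all-level generating polynomial, edge gates, Lie--Trotter, closedness of the cone, and specialization to a bivariate form. The gap is the encoding. With a separate complement variable $\bar z_v$ for every vertex, an edge gate does \emph{not} preserve Lorentzian polynomials, so step (ii) is false, not merely unverified.

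Take $n=2$, a single edge of weight $1$, and $s>0$. The gate fixes $z_1z_2$ and $\bar z_1\bar z_2$. It sends the level-one coefficient vector $(1,1)$ to $(\rho,\rho)$ with $\rho=\alpha+\beta=e^{(\vartheta+1)s}$. Hence it maps the Lorentzian polynomial $(z_1+\bar z_1)(z_2+\bar z_2)$ to
\[
 z_1z_2+\bar z_1\bar z_2+\rho\,(z_1\bar z_2+\bar z_1z_2).
\]
In the variable order $(z_1,\bar z_1,z_2,\bar z_2)$ its Hessian is $\left(\begin{smallmatrix}0&M\\M&0\end{smallmatrix}\right)$ with $M=\left(\begin{smallmatrix}1&\rho\\\rho&1\end{smallmatrix}\right)$. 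Its eigenvalues are $\pm(1+\rho)$ and $\pm(1-\rho)$. For $\vartheta>-1$ one has $\rho>1$, so there are two positive eigenvalues, with eigenvectors $(1,1,1,1)$ and $(1,-1,-1,1)$.

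This image involves only the four monomials on which your gate is specified. So the failure persists for every extension of the gate to the full multiaffine space, and no symbol in your $2n$ variables can be Lorentzian. Rescaling the gate or the variables by positive constants cannot help either, since neither changes the inertia of the Hessian. With one edge this polynomial is exactly $\Phi_{e^{tH}\one}$, so your global claim in (iii) also fails.

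The missing idea is to collapse all complement variables into one homogenizing variable. Set $\bar z_v=y$ for every $v$ before evolving, i.e.\ start from $\prod_i(y+z_i)$. Then apply the Br{\"a}nd{\'e}n--Huh symbol theorem on the bounded space $\kappa=(n,1,\ldots,1)$. There the gate fixes $1$ and $z_iz_j$ and acts on $(z_i,z_j)$ by $\left(\begin{smallmatrix}\alpha&\beta\\\beta&\alpha\end{smallmatrix}\right)$. Its symbol is $(y+v)^n\prod_{r\notin e}(z_r+u_r)\,\Psi$ with
\[
 \Psi=u_iu_j+z_iz_j+\alpha(z_iu_j+z_ju_i)+\beta(z_iu_i+z_ju_j).
\]
The Hessian of $\Psi$ has eigenvalues $1+(\alpha+\beta)$, $1-(\alpha+\beta)$, $-1-(\alpha-\beta)$, and $(\alpha-\beta)-1$. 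Exactly one is positive because
\[
\alpha+\beta=e^{(\vartheta+1)s}\geq1\geq e^{(\vartheta-1)s}=\alpha-\beta .
\]
This two-sided condition is where $-1\leq\vartheta\leq1$ enters. Conditions like $\alpha\leq1$ are not the relevant ones; that one already fails at $\vartheta=0$.

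In your $n=2$ example the shared variable gives $y^2+z_1z_2+\rho\,y(z_1+z_2)$, which is Lorentzian exactly when $\rho\geq1$. With this encoding, the rest of your outline (Trotter, cone closedness, the substitution $z_i=x$, and the bivariate criterion) goes through as in the paper.
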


At the other endpoint,
\begin{equation}\label{eq:minus-one}
 H_k^{(-1)}=A_k-D_k=-L(F_k(G)).
\end{equation}
Here $L(F_k(G)):=D_k-A_k$ is the weighted token-graph Laplacian.
The positive semidefiniteness of the weighted graph Laplacian gives
$h_k^{(-1)}=0$.  Moreover, $L(F_k(G))\one_k=0$ whether or not the token graph
is connected, so
\begin{equation}\label{eq:minus-one-heat}
 e^{-tL(F_k(G))}\one_k=\one_k,
 \qquad a_k^{(-1)}(t)=\binom nk.
\end{equation}
Thus equality holds in the finite-time inequalities, as well as in the
spectral inequalities, at $\vartheta=-1$.

Theorem~\ref{thm:heat} is stronger than the spectral conclusion: it is a
coefficient inequality at every finite time, before a limiting eigenvalue is
selected.  It also holds without connectivity or strict positivity of the
edge weights.  Together with Theorem~\ref{thm:spectral}, our other main
consequence is that a canonical polynomial built from the top spectral
projection at every fixed token level is Lorentzian; see
Corollary~\ref{cor:perron-polynomial}.

Here is the proof architecture.  We represent a vector indexed by subsets
as a multiaffine polynomial.  The product $\prod_i(y+z_i)$ simultaneously
initializes the all-ones vector on every token level, with $y$ homogenizing
the different subset degrees.  The full generator is a sum of edge
generators.  On the two states in which exactly one endpoint of an edge is
occupied, a local generator is $\vartheta I+P$, where $P$ exchanges the
occupations.  The Lorentzian operator symbol of the associated heat gate is
a nonnegative quadratic.  Its Hessian has eigenvalues
\begin{equation}\label{eq:intro-spectrum}
 1+e^{(\vartheta+1)s},\quad
 1-e^{(\vartheta+1)s},\quad
 -1-e^{(\vartheta-1)s},\quad
 e^{(\vartheta-1)s}-1.
\end{equation}
For $s\geq0$ there is at most one positive eigenvalue throughout
$-1\leq\vartheta\leq1$; for $s>0$, that parameter range is exact.
Br{\"a}nd{\'e}n and Huh's operator-symbol theorem therefore makes each edge
heat gate a Lorentzian preserver \cite{branden-huh-2020}.  The Lie--Trotter
product formula assembles the gates, cone closedness passes to the limit,
and a two-variable specialization gives Theorem~\ref{thm:heat}.  Finally, a
shifted Perron argument identifies each large-time exponential rate with
$h_k^{(\vartheta)}$.

We place this result relative to earlier token-graph spectral work.  Dalf{\'o}
et al. proved, with multiplicities, containment of the Laplacian spectrum of
$F_h(G)$ in that of $F_k(G)$ when $1\leq h\leq k\leq n/2$
\cite{dalfo-et-al-2021}.  Lew bounded the Laplacian eigenvalues newly
appearing between consecutive levels, where ``new'' includes an increase in
multiplicity \cite{lew-2024}.  Dalf{\'o}, Fiol, and Messegu{\'e} obtained
conditional adjacent-level bounds for algebraic connectivity and bounds on
Laplacian eigenvalues absent from the base graph
\cite{dalfo-fiol-messegue-2025}.  These are Laplacian results; they do not
give the signless-Laplacian or adjacency-radius comparisons considered here.
On the adjacency side, Barik and Verma established fixed-level spectral
radius bounds \cite{barik-verma-2024}, while Reyes, Dalf{\'o}, and Fiol gave
fixed-level estimates involving vertex-deleted subgraphs
\cite{reyes-dalfo-fiol-2024}.  Neither is a general comparison from level
$k$ to level $k+1$.

Conjecture~3.3 of Reyes--Dalf{\'o}--Fiol concerns newly appearing Laplacian
eigenvalues and is distinct from APS Conjecture~6.  More precisely, their
$\rho_k(G)$ is the largest eigenvalue newly appearing in the Laplacian
spectrum at level $k$, not the adjacency spectral radius of $F_k(G)$.  We are
not aware of an earlier proof of APS
Conjecture~5 or~6.  Theorem~\ref{thm:spectral} proves the weighted inequalities
formulated as APS Conjectures~5 and~6 and establishes discrete concavity
throughout $-1\leq\vartheta\leq1$;
Theorem~\ref{thm:heat} and Corollary~\ref{cor:perron-polynomial} give the
finite-time and top-projection strengthenings.

\section{Weighted token matrices and Lorentzian preliminaries}

Throughout, $G=(V,E,w)$ is a finite undirected loopless graph with
$V=[n]$ and edge weights $w_e\geq0$.  An edge of weight zero can equivalently
be omitted.  For $0\leq k\leq n$, let
\[
 \binom Vk=\{S\subseteq V:|S|=k\}.
\]
The weighted token graph $F_k(G)$ has this set as its vertex set.  If
$S,T\in\binom Vk$, then the $S,T$ entry of its adjacency matrix is
\begin{equation}\label{eq:weighted-adjacency}
 (A_k)_{S,T}=
 \begin{cases}
  w_{\{i,j\}},&S\mathbin\triangle T=\{i,j\}\in E,\\
  0,&\text{otherwise}.
 \end{cases}
\end{equation}
Thus one endpoint of $\{i,j\}$ lies in $S$ and the other does not.  Its
weighted degree is
\begin{equation}\label{eq:weighted-degree}
 d_k(S)=\sum_{\substack{e\in E\\|S\cap e|=1}}w_e,
 \qquad D_k=\operatorname{diag}\bigl(d_k(S):S\in\tbinom Vk\bigr).
\end{equation}
All matrices are real symmetric.  We write $\one_k$ for the all-ones vector
in $\R^{\binom nk}$; at $k=0,n$ this vector has one entry.

Complementation is already visible at the matrix level.  Let $C_k$ be the
permutation matrix taking the basis vector indexed by $S$ to the one indexed
by $V\setminus S$.  Since
$|S\cap e|=1$ if and only if $|(V\setminus S)\cap e|=1$, one has
\begin{equation}\label{eq:complement-matrices}
 C_kA_kC_k^{\!*}=A_{n-k},\qquad
 C_kD_kC_k^{\!*}=D_{n-k},\qquad
 C_k\one_k=\one_{n-k}.
\end{equation}
These identities hold even when $G$ or a token graph is disconnected.

For $S\subseteq V$, write $z^S=\prod_{i\in S}z_i$.  The linear
identification
\begin{equation}\label{eq:subset-polynomial-identification}
 (c_S)_{S\subseteq V}\longleftrightarrow
 \sum_{S\subseteq V}c_Sz^S
\end{equation}
turns vectors indexed by subsets into multiaffine polynomials.  For an edge
$e=\{i,j\}$, define the degree-preserving local operator
\begin{equation}\label{eq:local-generator}
 L_e^{(\vartheta)}z^S=
 \begin{cases}
  \vartheta z^S+z^{S\mathbin\triangle e},& |S\cap e|=1,\\
  0,& |S\cap e|\in\{0,2\}.
 \end{cases}
\end{equation}
The symmetric difference $S\mathbin\triangle e$ exchanges the occupied and
unoccupied endpoints.  Consequently an edge contributes a token swap and
one crossing-degree term precisely when its endpoints have different
occupations.  On homogeneous multiaffine degree $k$, equations
\eqref{eq:weighted-adjacency}--\eqref{eq:weighted-degree} therefore give
\begin{equation}\label{eq:slice-identification}
 \left.\sum_{e\in E}w_eL_e^{(\vartheta)}\right|_{\deg k}
 =H_k^{(\vartheta)}.
\end{equation}

We next record precisely the Lorentzian tools used in the proof.  We use the
terminology and normalization of Br{\"a}nd{\'e}n and Huh
\cite{branden-huh-2020}.  Their Proposition~2.2 says that a homogeneous
stable polynomial with nonnegative coefficients is Lorentzian; in
particular, a product of linear forms with nonnegative coefficients is a
basic source of Lorentzian polynomials.  Theorem~2.10 gives preservation
under nonnegative linear substitutions.  Corollary~2.11 gives preservation
under nonnegative directional derivatives.  Theorem~2.25 identifies the
Lorentzian cone as the closure of the strictly Lorentzian cone, so the cone
is closed in every fixed homogeneous coefficient space.  Theorem~2.30 and
Corollary~2.32, together, give closure under products.  We will use all of
these statements only for homogeneous polynomials with nonnegative
coefficients.

For a multi-index $\kappa\in\mathbb N^N$, let $\R_\kappa[z]$ denote the
polynomials whose degree in $z_i$ is at most $\kappa_i$.  If a linear
operator $T:\R_\kappa[z]\to\R_\gamma[z]$ is homogeneous, its bounded-degree
symbol is
\begin{equation}\label{eq:operator-symbol}
 \Sym_T(z,u)=
 \sum_{0\leq\alpha\leq\kappa}
 \binom{\kappa}{\alpha}T(z^\alpha)u^{\kappa-\alpha}.
\end{equation}
Here $u^{\kappa-\alpha}=\prod_i u_i^{\kappa_i-\alpha_i}$ and
$\binom\kappa\alpha=\prod_i\binom{\kappa_i}{\alpha_i}$.  The operator-symbol
theorem \cite[Theorem~3.2]{branden-huh-2020} says, in the form needed here,
that a homogeneous operator whose symbol is Lorentzian maps Lorentzian
polynomials in its bounded domain to Lorentzian polynomials.  The degree
bounds matter: below, each occupation variable has bound one, while the
single homogenizing variable has bound $n$.

For a homogeneous quadratic with nonnegative coefficients, the degree-two
characterization following \cite[Theorem~2.25]{branden-huh-2020} reduces
Lorentzianity to its Hessian having at most one positive eigenvalue.  Finally,
Example~2.26 of the same paper says that a bivariate homogeneous polynomial
\[
 \sum_{k=0}^n a_kx^ky^{n-k}
\]
with positive coefficients is Lorentzian precisely when the normalized
sequence $(a_k/\binom nk)_{k=0}^n$ is log-concave.  More generally the
coefficients must have no internal zeros; strict positivity below makes this
condition automatic.

The extra variable $y$ used below is not cosmetic.  A monomial $z^S$ has
degree $|S|$, so a sum over all subsets is not homogeneous.  Replacing it by
$y^{n-|S|}z^S$ puts every token level in total degree $n$.  Thus we work in
the coordinatewise bounded space for variables $(y,z_1,\ldots,z_n)$ with
\begin{equation}\label{eq:kappa-prelim}
 \kappa=(n,1,\ldots,1).
\end{equation}
The local heat gates preserve total degree and respect these bounds, which
is exactly the setting in which Theorem~3.2 applies.

\section{The local heat gate}

Fix an edge $e=\{i,j\}$ and $s\geq0$.  On the two-site multiaffine basis
$(1,z_i,z_j,z_iz_j)$, equation~\eqref{eq:local-generator} reads
\begin{equation}\label{eq:local-basis-generator}
 L_e^{(\vartheta)}(1)=0,\qquad
 L_e^{(\vartheta)}(z_iz_j)=0,\qquad
 \begin{pmatrix}L_e^{(\vartheta)}z_i\\
                 L_e^{(\vartheta)}z_j\end{pmatrix}
 =\begin{pmatrix}\vartheta&1\\1&\vartheta\end{pmatrix}
  \begin{pmatrix}z_i\\z_j\end{pmatrix}.
\end{equation}
Thus the generator vanishes on the zero- and two-occupation states.  On the
ordered one-occupation basis $(z_i,z_j)$ it is $\vartheta I+P$, where
$P^2=I$.  Consequently
\begin{equation}\label{eq:local-exponential}
 e^{s(\vartheta I+P)}
 =e^{\vartheta s}\bigl(\cosh(s)I+\sinh(s)P\bigr)
 =\begin{pmatrix}\alpha&\beta\\\beta&\alpha\end{pmatrix},
\end{equation}
where
\begin{equation}\label{eq:alpha-beta}
 \alpha=e^{\vartheta s}\cosh s,
 \qquad \beta=e^{\vartheta s}\sinh s.
\end{equation}
Both coefficients are nonnegative for $s\geq0$, regardless of the sign of
$\vartheta$.  Exponentiating the other two basis vectors gives the complete
action of the gate $T_e^{(\vartheta)}(s)=e^{sL_e^{(\vartheta)}}$:
\begin{equation}\label{eq:local-basis-gate}
 \begin{aligned}
 T_e^{(\vartheta)}(s)(1)&=1,&
 T_e^{(\vartheta)}(s)(z_iz_j)&=z_iz_j,\\
 T_e^{(\vartheta)}(s)(z_i)&=\alpha z_i+\beta z_j,&
 T_e^{(\vartheta)}(s)(z_j)&=\beta z_i+\alpha z_j.
 \end{aligned}
\end{equation}

\begin{lemma}[Local preserver]\label{lem:local}
For every $s\geq0$ and $-1\leq\vartheta\leq1$, the heat gate
$T_e^{(\vartheta)}(s)$, extended to act as the identity on $y$ and on all
occupation variables outside $e$, preserves Lorentzian polynomials in the
bounded homogeneous space \eqref{eq:kappa-prelim}.
\end{lemma}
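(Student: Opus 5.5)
The plan is to apply the operator-symbol theorem \cite[Theorem~3.2]{branden-huh-2020} directly to $T=T_e^{(\vartheta)}(s)$, viewed as a homogeneous operator on $\R_\kappa[y,z]$ with $\kappa=(n,1,\ldots,1)$ as in \eqref{eq:kappa-prelim}. By \eqref{eq:local-basis-gate}, $T$ preserves total degree and the coordinatewise degree bounds, so the theorem applies. It therefore suffices to show that $\Sym_T$ is Lorentzian.

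\textbf{Step 1: factor the symbol.} Since $T$ acts only on the variables $z_i,z_j$, it acts on a monomial by
\[
T\bigl(y^a z^\alpha\bigr)=y^a\,\prod_{l\notin e}z_l^{\alpha_l}\;T\bigl(z_i^{\alpha_i}z_j^{\alpha_j}\bigr).
\]
The multinomial weights $\binom\kappa\alpha$ split as a product over coordinates. Hence the sum defining \eqref{eq:operator-symbol} factors as
\[
\Sym_T=(y+u_0)^n\prod_{l\notin e}(z_l+u_l)\cdot Q(z_i,z_j,u_i,u_j),
\]
where $Q$ is the bounded-degree symbol of the two-site gate with bounds $(1,1)$. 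The first two factors are products of linear forms with nonnegative coefficients, so they are Lorentzian by \cite[Proposition~2.2]{branden-huh-2020}. Products of Lorentzian polynomials are Lorentzian by \cite[Theorem~2.30, Corollary~2.32]{branden-huh-2020}. So everything reduces to showing that $Q$ is Lorentzian.

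\textbf{Step 2: compute $Q$.} Summing over $\alpha\in\{0,1\}^2$ and using \eqref{eq:local-basis-gate} gives
\[
Q=u_iu_j+(\alpha z_i+\beta z_j)u_j+(\beta z_i+\alpha z_j)u_i+z_iz_j .
\]
This is a homogeneous quadratic. Its coefficients are nonnegative because $\alpha,\beta\geq0$ for $s\geq0$ by \eqref{eq:alpha-beta}. By the degree-two characterization recalled after \cite[Theorem~2.25]{branden-huh-2020}, $Q$ is Lorentzian exactly when its Hessian has at most one positive eigenvalue.

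\textbf{Step 3: the Hessian spectrum.} This is the heart of the argument, though it is a direct calculation. In the variable order $(z_i,z_j,u_i,u_j)$ the Hessian is
\[
\begin{pmatrix}0&1&\beta&\alpha\\1&0&\alpha&\beta\\\beta&\alpha&0&1\\\alpha&\beta&1&0\end{pmatrix}.
\]
It commutes with the Klein four-group of simultaneous swaps $z_i\leftrightarrow z_j,\ u_i\leftrightarrow u_j$ and $(z_i,z_j)\leftrightarrow(u_i,u_j)$. Its eigenvectors are therefore $(1,1,1,1)$, $(1,1,-1,-1)$, $(1,-1,1,-1)$ and $(1,-1,-1,1)$, with eigenvalues
\[
1+\alpha+\beta,\qquad 1-(\alpha+\beta),\qquad -1-(\alpha-\beta),\qquad (\alpha-\beta)-1 .
\]
By \eqref{eq:alpha-beta}, $\alpha\pm\beta=e^{(\vartheta\pm1)s}$, which reproduces \eqref{eq:intro-spectrum}.
\begin{itemize}
\item The first eigenvalue is positive.
\item The third is negative.
\item The second is $\leq0$ iff $(\vartheta+1)s\geq0$, which holds since $\vartheta\geq-1$.
\item The fourth is $\leq0$ iff $(\vartheta-1)s\leq0$, which holds since $\vartheta\leq1$.
\end{itemize}
So there is exactly one positive eigenvalue, $Q$ is Lorentzian, and the lemma follows from Step~1.

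The only point needing care is Step~1: checking that the bounded-degree symbol really factors as stated. The homogenizing variable carries bound $n$ while each occupation variable carries bound $1$, and the binomial weights must match $(y+u_0)^n$ exactly. This amounts to verifying that the symbol of a tensor product of operators on disjoint variable sets is the product of their symbols, which follows coordinate by coordinate from \eqref{eq:operator-symbol}.
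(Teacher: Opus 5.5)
Your proposal is correct and follows the paper's proof essentially line for line. It uses the same two-site symbol $\Psi_{\vartheta,s}$, the same Hessian diagonalized by the four sign vectors (your Klein-four-group symmetry is a tidy explanation of why those vectors work), and the same factorization $(y+v)^n\prod_{r\notin e}(z_r+u_r)\,\Psi_{\vartheta,s}$, combined with product closure and Br{\"a}nd{\'e}n--Huh's Theorem~3.2.
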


\begin{proof}
With $u_i,u_j$ paired to $z_i,z_j$, insert the four formulas in
\eqref{eq:local-basis-gate} into \eqref{eq:operator-symbol}.  The input
monomials $1,z_i,z_j,z_iz_j$ contribute, in that order,
$u_iu_j$, $T(z_i)u_j$, $T(z_j)u_i$, and $T(z_iz_j)$.  Hence the exact
two-site symbol is
\begin{equation}\label{eq:local-symbol}
 \Psi_{\vartheta,s}
 =u_i u_j+z_i z_j
 +\alpha(z_i u_j+z_j u_i)
 +\beta(z_i u_i+z_j u_j).
\end{equation}
In the variable order $(z_i,z_j,u_i,u_j)$, its Hessian is
\begin{equation}\label{eq:hessian}
 \mathcal H_{\vartheta,s}=
 \begin{pmatrix}
 0&1&\beta&\alpha\\
 1&0&\alpha&\beta\\
 \beta&\alpha&0&1\\
 \alpha&\beta&1&0
 \end{pmatrix}.
\end{equation}
The four mutually orthogonal sign vectors diagonalize this matrix.  More
explicitly,
\begin{equation}\label{eq:sign-diagonalization}
\begin{array}{c|c}
\text{eigenvector}&\text{eigenvalue}\\ \hline
(1,1,1,1)&1+\alpha+\beta\\
(1,1,-1,-1)&1-\alpha-\beta\\
(1,-1,1,-1)&-1-\alpha+\beta\\
(1,-1,-1,1)&-1+\alpha-\beta.
\end{array}
\end{equation}
Using $\alpha+\beta=e^{(\vartheta+1)s}$ and
$\alpha-\beta=e^{(\vartheta-1)s}$ gives, in the same order, the four
numbers in \eqref{eq:intro-spectrum}.  If
$-1\leq\vartheta\leq1$, the first is positive, the second and fourth are
nonpositive, and the third is strictly negative.  Since every coefficient
of \eqref{eq:local-symbol} is nonnegative, the degree-two Hessian criterion
shows that $\Psi_{\vartheta,s}$ is Lorentzian.

It remains to check the symbol on the actual bounded space, rather than only
on the two active variables.  Let $v$ be paired with $y$, and pair $u_r$
with $z_r$.  The identity on polynomials of $y$-degree at most $n$ has
symbol
$\sum_{q=0}^n\binom nq y^qv^{n-q}=(y+v)^n$.  The identity on a spectator
variable $z_r$ of degree at most one has symbol $z_r+u_r$.  Since symbols
factor across disjoint variable sets, the full bounded-degree symbol is
\begin{equation}\label{eq:full-symbol}
 (y+v)^n\prod_{r\notin e}(z_r+u_r)\Psi_{\vartheta,s}.
\end{equation}
Every displayed linear factor is a stable polynomial with nonnegative
coefficients and hence is Lorentzian.  Products preserve Lorentzianity, so
\eqref{eq:full-symbol} is Lorentzian.  Theorem~3.2 of
\cite{branden-huh-2020}, applied with the bound
\eqref{eq:kappa-prelim}, completes the proof.
\end{proof}

\begin{remark}[Endpoints and the sharp local range]\label{rem:local-range}
At $s=0$, the gate is the identity for every $\vartheta$ and the eigenvalues
in \eqref{eq:intro-spectrum} are $2,0,-2,0$.  For $s>0$, the signature test
holds exactly on $[-1,1]$.  At $\vartheta=-1$ the second eigenvalue vanishes;
at $\vartheta=1$ the fourth vanishes.  If $\vartheta<-1$, then
$1-e^{(\vartheta+1)s}>0$, so the first two eigenvalues are positive.  If
$\vartheta>1$, then $e^{(\vartheta-1)s}-1>0$, again producing two positive
eigenvalues.  Thus the local symbol is not Lorentzian outside $[-1,1]$ at
positive time.  This is a limitation of this certificate, not a
counterexample to spectral concavity.
\end{remark}

\section{The global Lorentzian heat flow}

The all-level initial polynomial is
\begin{equation}\label{eq:initial-polynomial}
 \Phi_0(y,z)=\prod_{i=1}^n(y+z_i)
 =\sum_{S\subseteq V}y^{n-|S|}z^S.
\end{equation}
It is homogeneous of degree $n$, lies within the coordinatewise bounds
\eqref{eq:kappa-prelim}, and is stable because it is a product of real
stable linear forms.  Proposition~2.2 of \cite{branden-huh-2020} therefore
shows that it is Lorentzian.  More concretely, its $z$-degree-$k$ part is
$y^{n-k}\sum_{|S|=k}z^S$, which corresponds to the all-ones vector on level
$k$.  Set
\begin{equation}\label{eq:global-generator}
 L^{(\vartheta)}=\sum_{e\in E}w_eL_e^{(\vartheta)},
 \qquad
 \Phi_{\vartheta,t}=e^{tL^{(\vartheta)}}\Phi_0.
\end{equation}

\begin{lemma}[Global heat flow]\label{lem:global}
For every $t\geq0$ and $-1\leq\vartheta\leq1$, the polynomial
$\Phi_{\vartheta,t}$ is Lorentzian.
\end{lemma}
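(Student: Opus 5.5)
The plan is to assemble the edge gates of Lemma~\ref{lem:local} into the full semigroup by a Lie--Trotter product formula, and then pass to the limit using closedness of the Lorentzian cone. Throughout we work in the finite-dimensional space $\mathcal V$ of homogeneous polynomials of degree $n$ in $(y,z_1,\ldots,z_n)$ obeying the bounds \eqref{eq:kappa-prelim}. Each $L_e^{(\vartheta)}$, extended as the identity on $y$ and on spectator variables, preserves $\mathcal V$. It fixes $z$-degree and acts only on the multiaffine $z_i,z_j$ dependence, so $L^{(\vartheta)}$ and every $e^{sL_e^{(\vartheta)}}$ are linear endomorphisms of $\mathcal V$. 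The initial polynomial $\Phi_0$ lies in $\mathcal V$ and is Lorentzian, as already noted after \eqref{eq:initial-polynomial}.

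Fix $t\ge0$ and enumerate $E=\{e_1,\ldots,e_m\}$. For $N\ge1$ define
\[
 \Phi^{(N)}=\Bigl(T_{e_1}^{(\vartheta)}(w_{e_1}t/N)\cdots T_{e_m}^{(\vartheta)}(w_{e_m}t/N)\Bigr)^N\Phi_0 .
\]
Since $w_e\ge0$, each time argument $w_et/N$ is nonnegative. Lemma~\ref{lem:local} therefore says each factor maps Lorentzian polynomials in $\mathcal V$ to Lorentzian polynomials in $\mathcal V$. By induction over the $mN$ factors, $\Phi^{(N)}$ is Lorentzian for every $N$. Edges of weight zero give the identity gate and are harmless.

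Next we invoke the Lie--Trotter product formula for finitely many matrices on the finite-dimensional space $\mathcal V$:
\[
 \bigl(e^{A_1/N}\cdots e^{A_m/N}\bigr)^N\to e^{A_1+\cdots+A_m}.
\]
This follows from the elementary estimate $\prod_r e^{A_r/N}=I+\tfrac1N\sum_rA_r+O(N^{-2})$ together with the standard telescoping bound. Applying it with $A_r=t\,w_{e_r}L_{e_r}^{(\vartheta)}$ gives $\Phi^{(N)}\to e^{tL^{(\vartheta)}}\Phi_0=\Phi_{\vartheta,t}$ coefficientwise.

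Finally, the Lorentzian cone in the fixed coefficient space of degree-$n$ homogeneous polynomials is closed, by \cite[Theorem~2.25]{branden-huh-2020}. Hence the limit $\Phi_{\vartheta,t}$ is Lorentzian. The only step needing any care is the first one: checking that the gates, and hence the Trotter products, genuinely act within the bounded space $\mathcal V$ where Lemma~\ref{lem:local} applies. The rest is routine.
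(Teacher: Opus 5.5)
Your proposal is correct and follows essentially the same route as the paper: edge gates from Lemma~\ref{lem:local} with nonnegative local times $w_e t/N$, composed and raised to the $N$th power, the Lie--Trotter formula on the finite-dimensional bounded space, and closedness of the fixed-degree Lorentzian cone via \cite[Theorem~2.25]{branden-huh-2020}. Your checks that the gates act within $\mathcal V$ and your sketch of the product formula are consistent with the paper's argument.
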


\begin{proof}
If $E$ is empty, then $L^{(\vartheta)}=0$ and the assertion follows directly
from \eqref{eq:initial-polynomial}.  Otherwise list the edges as
$e_1,\ldots,e_m$.  All operators act on the finite-dimensional space of
polynomials satisfying \eqref{eq:kappa-prelim}.  The Lie--Trotter product
formula, in any matrix norm on that space, gives
\begin{equation}\label{eq:trotter}
 e^{tL^{(\vartheta)}}=
 \lim_{r\to\infty}
 \left(
  e^{tw_{e_1}L_{e_1}^{(\vartheta)}/r}\cdots
  e^{tw_{e_m}L_{e_m}^{(\vartheta)}/r}
 \right)^r.
\end{equation}
For a zero-weight edge the corresponding factor is the identity.  Every
other factor has local time $tw_{e_j}/r\geq0$ and preserves Lorentzian
polynomials by Lemma~\ref{lem:local}.
Consequently, their composition and its $r$th power also preserve
Lorentzian polynomials.  Applying this power to $\Phi_0$ therefore produces a Lorentzian
polynomial for every $r$.  Norm
convergence of the operators implies that these polynomials converge
coefficientwise to $\Phi_{\vartheta,t}$.  By
\cite[Theorem~2.25]{branden-huh-2020}, the fixed-degree Lorentzian cone is
closed, so the limit is Lorentzian.  Notice that no commutativity among the
edge generators is assumed.
\end{proof}

The Trotter step is the only passage from local to global.  It is also where
nonnegative weights enter essentially: they ensure that every local time in
\eqref{eq:trotter} is nonnegative, as required by the local preserver lemma.
No connectedness hypothesis appears.

The generator preserves $z$-degree.  For each $k$, the coefficient vector of
$\sum_{|S|=k}z^S$ is $\one_k$, and the restriction identity
\eqref{eq:slice-identification} exponentiates to
\begin{equation}\label{eq:slice-semigroup}
 \left.e^{tL^{(\vartheta)}}\right|_{\deg k}
 =e^{tH_k^{(\vartheta)}}.
\end{equation}
Combining this with \eqref{eq:initial-polynomial} gives the exact all-level
expansion
\begin{equation}\label{eq:Phi-expansion}
 \Phi_{\vartheta,t}(y,z)
 =\sum_{k=0}^n\sum_{|S|=k}
 \left(e^{tH_k^{(\vartheta)}}\one_k\right)_S
 y^{n-k}z^S.
\end{equation}

\section{Finite-time ultra log-concavity and symmetry}

Identify all occupation variables with a single variable $x$.  This is a
nonnegative linear substitution, so Theorem~2.10 of
\cite{branden-huh-2020} and Lemma~\ref{lem:global} show that
\begin{equation}\label{eq:bivariate}
 P_{\vartheta,t}(y,x)
 =\Phi_{\vartheta,t}(y,x,\ldots,x)
 =\sum_{k=0}^n a_k^{(\vartheta)}(t)y^{n-k}x^k
\end{equation}
is Lorentzian.  Indeed, after substitution, the coefficient of
$y^{n-k}x^k$ in \eqref{eq:Phi-expansion} is
\begin{equation}\label{eq:coefficient-sum}
 \sum_{|S|=k}\left(e^{tH_k^{(\vartheta)}}\one_k\right)_S
 =\one_k^{\!*}e^{tH_k^{(\vartheta)}}\one_k
 =a_k^{(\vartheta)}(t).
\end{equation}

We make positivity explicit because it removes the no-internal-zero
qualification in the bivariate Lorentzian criterion.  Each
$H_k^{(\vartheta)}$ is a Metzler matrix: its off-diagonal entries are
nonnegative, although its diagonal entries may be negative when
$\vartheta<0$.  Choose $c$ so that $H_k^{(\vartheta)}+cI$ is entrywise
nonnegative.  The power series for its exponential gives
\begin{equation}\label{eq:metzler-exponential}
 e^{tH_k^{(\vartheta)}}
 =e^{-ct}e^{t(H_k^{(\vartheta)}+cI)}\geq0
 \qquad\text{entrywise}.
\end{equation}
Independently, symmetry of $H_k^{(\vartheta)}$ makes its exponential positive
definite, because all eigenvalues of the exponential are strictly positive.
Thus $a_k^{(\vartheta)}(t)>0$ for every $k$ and $t\geq0$, including the
one-dimensional boundary levels.

Example~2.26 of \cite{branden-huh-2020} applied to
\eqref{eq:bivariate} now says that
$a_k^{(\vartheta)}(t)/\binom nk$ is log-concave.  Written at an interior
index, this is precisely \eqref{eq:ulc}, proving
Theorem~\ref{thm:heat}.  At $t=0$ one has
$a_k^{(\vartheta)}(0)=\binom nk$, so all normalized values equal one and the
inequalities are equalities.

The finite-time sequence also has the same complement symmetry as the token
matrices.  By \eqref{eq:complement-matrices}, functional calculus gives
$e^{tH_{n-k}^{(\vartheta)}}=C_ke^{tH_k^{(\vartheta)}}C_k^{\!*}$, and hence
\begin{equation}\label{eq:heat-complement}
 a_{n-k}^{(\vartheta)}(t)
 =\one_k^{\!*}C_k^{\!*}
   \bigl(C_ke^{tH_k^{(\vartheta)}}C_k^{\!*}\bigr)C_k\one_k
 =a_k^{(\vartheta)}(t).
\end{equation}
Hence the normalized heat contents are symmetric and log-concave, so they
are nondecreasing up to the midpoint: apply the same forward-difference
argument used below to their logarithms.  This extra finite-time monotonicity
is only a reformulation of Theorem~\ref{thm:heat} and complement symmetry;
the spectral result still requires the large-time argument below.

\section{The spectral limit and midpoint monotonicity}

To extract a top eigenvalue from the heat content, we must know that the
all-ones vector sees the top eigenspace.  This is automatic for an irreducible
nonnegative adjacency matrix, but our setting includes negative diagonal
entries, disconnected token graphs, and reducible top eigenspaces.  The
following shifted argument covers all of them.

Fix $k$ and $\vartheta$.  Choose
\begin{equation}\label{eq:perron-shift}
 c\geq \max_{S\in\binom Vk}\{-\vartheta d_k(S),0\},
 \qquad B_k=H_k^{(\vartheta)}+cI.
\end{equation}
Then $B_k$ is symmetric and entrywise nonnegative.  Perron--Frobenius,
without an irreducibility assumption, supplies a nonzero vector $q_k\geq0$
for the spectral radius of $B_k$.  Because a symmetric nonnegative matrix
has its spectral radius as its largest eigenvalue, $q_k$ is a top eigenvector
of $B_k$, and hence of $H_k^{(\vartheta)}$.  Moreover
$\langle\one_k,q_k\rangle>0$.  If several connected components of the token
graph attain the same top eigenvalue, this argument simply chooses a
nonnegative top vector supported on one or more maximizing components.

Let $(\lambda_{k,j},v_{k,j})_j$ be an orthonormal spectral resolution of
$H_k^{(\vartheta)}$.  Then
\begin{equation}\label{eq:spectral-expansion}
 a_k^{(\vartheta)}(t)
 =\sum_j e^{t\lambda_{k,j}}
  |\langle\one_k,v_{k,j}\rangle|^2.
\end{equation}
The shifted Perron vector shows that the orthogonal projection of $\one_k$
onto the top eigenspace is nonzero.  Thus at least one coefficient multiplying
$e^{th_k^{(\vartheta)}}$ in \eqref{eq:spectral-expansion} is positive.  All
summands are nonnegative, so elementary finite-sum asymptotics give
\begin{equation}\label{eq:growth-rate}
 \lim_{t\to\infty}\frac1t\log a_k^{(\vartheta)}(t)
 =h_k^{(\vartheta)}.
\end{equation}
This includes the boundary levels, where $a_0(t)=a_n(t)=1$ and the growth
rate is zero.

Taking logarithms in \eqref{eq:ulc} gives
\begin{align}\label{eq:log-ulc}
 2\log a_k^{(\vartheta)}(t)
 &\geq \log a_{k-1}^{(\vartheta)}(t)
       +\log a_{k+1}^{(\vartheta)}(t) \notag\\
 &\quad +2\log\binom nk-
       \log\binom n{k-1}-\log\binom n{k+1}.
\end{align}
Divide by $t$ and let $t\to\infty$.  The binomial terms are independent of
$t$ and vanish after division, while \eqref{eq:growth-rate} applies to all
three heat contents.  The result is exactly
\eqref{eq:spectral-concavity}.

Equation~\eqref{eq:complement-matrices} shows that $H_k^{(\vartheta)}$ and
$H_{n-k}^{(\vartheta)}$ are permutation-similar.  Hence
$h_k^{(\vartheta)}=h_{n-k}^{(\vartheta)}$.  Put
$d_r=h_r^{(\vartheta)}-h_{r-1}^{(\vartheta)}$.  Discrete concavity makes
$d_r$ nonincreasing in $r$, while symmetry gives
\begin{equation}\label{eq:difference-symmetry}
 d_{n-r+1}=h_{n-r+1}^{(\vartheta)}-h_{n-r}^{(\vartheta)}
 =h_{r-1}^{(\vartheta)}-h_r^{(\vartheta)}=-d_r.
\end{equation}
If $r\leq n-r+1$, then monotonicity of the differences yields
$d_r\geq d_{n-r+1}=-d_r$, and hence $d_r\geq0$.  Taking $r=k+1$ proves
\eqref{eq:monotonicity} whenever $k<n/2$ and completes the proof of
Theorem~\ref{thm:spectral}.  For odd $n$, the two middle levels are
complements and have equal top eigenvalue.

\begin{corollary}[APS Conjectures 5 and 6]\label{cor:aps}
For every nonnegatively weighted graph and every $1\leq k<n/2$,
\begin{align*}
 \lambda_{\max}\!\left(Q(F_k(G))\right)
 &\leq\lambda_{\max}\!\left(Q(F_{k+1}(G))\right),\\
 \lambda_{\max}\!\left(A(F_k(G))\right)
 &\leq\lambda_{\max}\!\left(A(F_{k+1}(G))\right).
\end{align*}
In particular, these are the inequalities in Conjectures~5 and~6 of
\cite{apte-parekh-sud-2026}.
\end{corollary}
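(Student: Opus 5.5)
The corollary follows by specializing the monotonicity inequality \eqref{eq:monotonicity} of Theorem~\ref{thm:spectral} to the two endpoint-adjacent parameter values $\vartheta=1$ and $\vartheta=0$. The plan is to identify the matrices and then quote the theorem; no new analytic input is needed.

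\emph{Step 1: identify the matrices.} By the definitions \eqref{eq:weighted-adjacency}--\eqref{eq:weighted-degree}, $A_k$ is the weighted adjacency matrix $A(F_k(G))$ and $D_k$ is the weighted degree matrix of $F_k(G)$. Hence $H_k^{(1)}=D_k+A_k=Q(F_k(G))$ is the weighted signless Laplacian, and $H_k^{(0)}=A_k=A(F_k(G))$. One should check that the weighted token graph and its degree convention agree with the APS normalization. In particular, the degree $d_k(S)$ must sum the weights of edges crossing the boundary of $S$, which is exactly the row sum of $A_k$. This holds because each such edge $\{i,j\}$ with $|S\cap e|=1$ contributes precisely one neighbor $S\triangle e$ with weight $w_e$.

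\emph{Step 2: apply the theorem.} Both values $\vartheta=1$ and $\vartheta=0$ lie in $[-1,1]$. Therefore \eqref{eq:monotonicity} gives $h_k^{(\vartheta)}\le h_{k+1}^{(\vartheta)}$ for $0\le k<n/2$, and in particular for $1\le k<n/2$. Substituting the identifications from Step 1 yields the two displayed inequalities.

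\emph{Main obstacle.} The only delicate point is matching conventions with \cite{apte-parekh-sud-2026}: weighted versus unweighted graphs, and whether disconnected token graphs or zero weights are allowed. Theorem~\ref{thm:spectral} was proved with neither connectivity nor positivity assumptions, so it covers every case the conjectures permit. The precise range of $k$ in the conjectures should also be confirmed to be within $k<n/2$, which is exactly the range for which \eqref{eq:monotonicity} was established.
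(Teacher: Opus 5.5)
Your proposal is correct and is the same as the paper's proof: you specialize \eqref{eq:monotonicity} to $\vartheta=1$ and $\vartheta=0$, using the identifications $H_k^{(1)}=Q(F_k(G))$ and $H_k^{(0)}=A(F_k(G))$. Your extra check that $d_k(S)$ equals the row sum of $A_k$ is a harmless consistency check. One small wording point: $\vartheta=0$ is an interior point of $[-1,1]$, not an ``endpoint-adjacent'' value.
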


\begin{proof}
Apply \eqref{eq:monotonicity} with $\vartheta=1$ and $\vartheta=0$,
respectively, and use $H_k^{(1)}=Q(F_k(G))$ and
$H_k^{(0)}=A(F_k(G))$.
\end{proof}

\section{Top spectral-projection polynomials}

The multivariate heat flow also controls the top eigenspaces within each
token level, not merely their eigenvalues.  Fix $k$ and define
\begin{equation}\label{eq:fixed-level}
 f_{k,t}(z)=\sum_{|S|=k}
 \left(e^{tH_k^{(\vartheta)}}\one_k\right)_S z^S.
\end{equation}
Equation~\eqref{eq:Phi-expansion} gives the exact extraction formula
\begin{equation}\label{eq:extraction}
 f_{k,t}(z)=\frac1{(n-k)!}
 \left.\partial_y^{n-k}\Phi_{\vartheta,t}(y,z)\right|_{y=0}.
\end{equation}
Indeed, differentiating $n-k$ times kills terms with smaller $y$-degree,
and setting $y=0$ kills those with larger $y$-degree; the surviving term
acquires the factor $(n-k)!$.  Repeated differentiation preserves
Lorentzianity by \cite[Corollary~2.11]{branden-huh-2020}, and the
specialization $y=0$ is a nonnegative linear substitution covered by
\cite[Theorem~2.10]{branden-huh-2020}.  Therefore $f_{k,t}$ is Lorentzian.
Its coefficients are nonnegative by \eqref{eq:metzler-exponential}, and the
polynomial is nonzero.

Let $\Pi_k$ be the orthogonal projection onto the top eigenspace of
$H_k^{(\vartheta)}$.  The spectral theorem yields
\begin{equation}\label{eq:projection-limit}
 e^{-th_k^{(\vartheta)}}e^{tH_k^{(\vartheta)}}\one_k
 \longrightarrow \Pi_k\one_k.
\end{equation}
The right side is nonzero by the shifted Perron argument in
\eqref{eq:perron-shift}.  Applying the coefficient-vector identification to
\eqref{eq:projection-limit} shows that
\begin{equation}\label{eq:projection-polynomial-limit}
 e^{-th_k^{(\vartheta)}}f_{k,t}(z)
 \longrightarrow
 \sum_{|S|=k}(\Pi_k\one_k)_S z^S
\end{equation}
coefficientwise.  Every polynomial on the left is a positive multiple of a
Lorentzian polynomial.  Theorem~2.25 of \cite{branden-huh-2020} permits the
limit and gives the following consequence.

\begin{corollary}\label{cor:perron-polynomial}
For every $k$ and $-1\leq\vartheta\leq1$, the nonzero polynomial
\begin{equation}\label{eq:perron-polynomial}
 \sum_{|S|=k}(\Pi_k\one_k)_S z^S
\end{equation}
is Lorentzian.  If the top eigenspace is one-dimensional, the polynomial of
a nonnegative Perron eigenvector is Lorentzian, up to positive scaling.
\end{corollary}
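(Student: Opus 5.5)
The plan is to assemble the argument that precedes the statement into a proof.

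\emph{First claim.} Fix $k$ and $\vartheta\in[-1,1]$. By Lemma~\ref{lem:global}, $\Phi_{\vartheta,t}$ is Lorentzian for every $t\geq0$. The extraction formula \eqref{eq:extraction} writes $f_{k,t}$ as the $(n-k)$-fold $y$-derivative of $\Phi_{\vartheta,t}$ followed by the substitution $y=0$. The derivative step preserves Lorentzianity by \cite[Corollary~2.11]{branden-huh-2020}, and the substitution does by \cite[Theorem~2.10]{branden-huh-2020}. Hence $f_{k,t}$ is Lorentzian, homogeneous of degree $k$, and multiaffine.

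Multiplying by the positive scalar $e^{-th_k^{(\vartheta)}}$ keeps the polynomial in the Lorentzian cone, since that cone is closed under positive dilations. By \eqref{eq:projection-polynomial-limit}, these rescaled polynomials converge coefficientwise, as $t\to\infty$, to $\sum_{|S|=k}(\Pi_k\one_k)_Sz^S$. All of them lie in the fixed finite-dimensional space of degree-$k$ homogeneous polynomials, where the Lorentzian cone is closed by \cite[Theorem~2.25]{branden-huh-2020}. So the limit is Lorentzian.

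The limit is nonzero. By the shifted Perron argument around \eqref{eq:perron-shift}, there is a top eigenvector $q_k\geq0$ with $\langle\one_k,q_k\rangle>0$. This gives $\langle\Pi_k\one_k,q_k\rangle=\langle\one_k,q_k\rangle>0$, so $\Pi_k\one_k\neq0$.

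\emph{Second claim.} Suppose the top eigenspace is one-dimensional, spanned by a unit vector $v$. Then $\Pi_k\one_k=\langle\one_k,v\rangle v$. Choose the sign of $v$ using the Perron vector: $v$ is a positive multiple of $q_k$, so $v\geq0$ and $\langle\one_k,v\rangle>0$. Thus the polynomial \eqref{eq:perron-polynomial} is the positive scalar $\langle\one_k,v\rangle$ times the polynomial of the nonnegative Perron eigenvector. Dividing by that positive scalar preserves Lorentzianity, which gives the claim up to positive scaling.

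\emph{Main point of care.} The delicate step is the limit argument. Closedness is needed in a fixed-degree space, which holds here because every $f_{k,t}$ has degree exactly $k$. The limit must also be nonzero, since the zero polynomial is a degenerate case; this is exactly where the Perron computation $\langle\one_k,q_k\rangle>0$ is used.

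No connectivity assumption is needed anywhere. When the top eigenspace is reducible, the corollary is stated for the projection $\Pi_k\one_k$ rather than for an individual eigenvector.
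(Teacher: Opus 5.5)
Your proposal is correct and follows the paper's argument essentially step for step: Lorentzianity of $f_{k,t}$ via the $(n-k)$-fold $y$-derivative and the substitution $y=0$, rescaling by $e^{-th_k^{(\vartheta)}}$, and the coefficientwise limit through the closed fixed-degree Lorentzian cone, with nonvanishing supplied by the shifted Perron vector. Your explicit identity $\langle\Pi_k\one_k,q_k\rangle=\langle\one_k,q_k\rangle>0$ and the rank-one formula $\Pi_k\one_k=\langle\one_k,v\rangle v$ spell out details that the paper leaves implicit.
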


The projection formulation is essential when the top eigenvalue is
multiple.  It selects the canonical nonnegative vector $\Pi_k\one_k$ rather
than asserting that every vector in a reducible top eigenspace has
Lorentzian coefficients.  At $k=0$ and $k=n$ the conclusion reduces to a
positive constant or a positive multiple of $z_1\cdots z_n$, respectively.
At $\vartheta=-1$, the all-ones vector already lies in the zero eigenspace of
the Laplacian on every connected component, so $\Pi_k\one_k=\one_k$ even
when $F_k(G)$ is disconnected.

Theorem~\ref{thm:spectral} concerns the largest eigenvalues of undirected
token graphs with nonnegative edge weights.  Signed weights, directed token
graphs, other exclusion rules, and the remaining eigenvalues are outside its
scope.

In the Hamiltonian interpretation of APS \cite{apte-parekh-sud-2026}, the
matrices $H_k^{(\vartheta)}$ occur as fixed-particle-number blocks of
number-conserving $2$-local models.  Theorem~\ref{thm:spectral} says that the
top block energy is concave in particle number and, by complement symmetry,
is maximized at half filling: at $k=n/2$ for even $n$, and at the two equal
middle levels for odd $n$.

\bibliographystyle{plain}
\bibliography{references}

\end{document}